\theoremstyle{plain}
\newtheorem{theorem}{Theorem}
\newtheorem{lemma}{Lemma}
\theoremstyle{definition}
\newtheorem{definition}{Definition}
\begin{document}

\title{Constructing Seifert surfaces from $n$-bridge link projections}
\author{Joan E. Licata}
\maketitle

\begin{abstract}
This paper presents a new algorithm $\mathfrak{A}$ for constructing Seifert surfaces from $n$-bridge projections of links.  The algorithm produces minimal complexity surfaces for large classes of braids and alternating links.  In addition, we consider a family  of  knots for which the canonical genus is strictly greater than the genus, ($g_c(K) > g(K)$), and show that $\mathfrak{A}$ builds surfaces realizing the knot genus $g(K)$.  We also present a generalization of Seifert's algorithm which may be used to construct surfaces representing arbitrary relative second homology classes in a link complement.
\end{abstract}

\section{Introduction}

 A Seifert surface for an oriented link is an orientable surface whose oriented boundary is the link.  This notion gives rise to a fundamental invariant of knots and links: the minimal genus of a Seifert surface for $L$ is known as the \textit{genus} of $L$ and denoted by $g(L)$.  In this paper we present a new algorithm, $\mathfrak{A}  $, which builds Seifert surfaces from $n$-bridge projections of links.  For certain large classes of link projections, the algorithm builds surfaces realizing the link genus.  (Theorems~\ref{thm:alt} and ~\ref{thm:braid}.) 

Seifert's algorithm is a classical method for constructing Seifert surfaces from a link projection.  The minimal possible genus of a surface constructed via Seifert's algorithm is known as the \textit{canonical genus} $g_c(L)$ of the link.  In the case of alternating projections or positive braids, Seifert's algorithm realizes both the genus and the canonical genus.   However, there also exist links for which $g_c(L) > g(L)$.   Section~\ref{sect:ex}  presents a family of such examples due to Kobayashi, Kobayashi, and Kawauchi (\cite{K} , \cite{Ka}),  and we show that $\mathfrak{A}$ successfully yields surfaces realizing the knot genus.  

\begin{theorem}\label{thm:alt}
The surface $\Sigma_{\mathfrak{A}}$ built by applying $\mathfrak{A}$ to an alternating $n$-bridge projection of a link $L$ has genus equal to the genus of $L$.
\end{theorem}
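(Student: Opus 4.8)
The plan is to compute the genus of $\Sigma_{\mathfrak{A}}$ directly from the diagram and then pin it down from below by a classical invariant. Write $D$ for the alternating $n$-bridge projection; one first reduces to the case that $D$, and hence $\Sigma_{\mathfrak{A}}$, is connected (a non-prime or split alternating diagram is visibly such, and the genus is additive). From the recipe for $\mathfrak{A}$ one reads off $\chi(\Sigma_{\mathfrak{A}})$ as an explicit function of the crossing number of $D$ and the number of constituent disks of the construction (the $n$-bridge analogue of Seifert circles), so that $1-\chi(\Sigma_{\mathfrak{A}}) = 2g(\Sigma_{\mathfrak{A}}) + \mu(L) - 1$ is determined combinatorially. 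Since $\Sigma_{\mathfrak{A}}$ is a Seifert surface we have $g(L)\le g(\Sigma_{\mathfrak{A}})$ for free, so the whole theorem reduces to the reverse inequality, i.e. to showing $\Sigma_{\mathfrak{A}}$ is of minimal genus. A first thing to try is the cheap route: check whether, on an alternating projection, $\Sigma_{\mathfrak{A}}$ is isotopic to the surface produced by Seifert's algorithm on $D$ (or on $D$ with a modified orientation), in which case the statement follows immediately from Crowell--Murasugi--Gabai. If $\mathfrak{A}$ genuinely differs there, one needs the argument below.

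For the lower bound I would use the Alexander polynomial. Any Seifert surface $\Sigma$ carries a Seifert form $V$, a square integer matrix of size $1-\chi(\Sigma)$ with $\Delta_L(t) \doteq \det(V - tV^{\mathsf T})$, so the breadth of $\Delta_L$ is at most $1-\chi(\Sigma)$, with equality forcing $\Sigma$ to realize $g(L)$. It therefore suffices to produce a basis of $H_1(\Sigma_{\mathfrak{A}})$ in which the resulting Seifert matrix $V$ has $\det(V - tV^{\mathsf T})$ of full degree $1-\chi(\Sigma_{\mathfrak{A}})$, equivalently whose extreme coefficient is $\pm 1$. The natural basis is the one dual to the bands of the construction; in it $V + V^{\mathsf T}$ is a signed version of the intersection form supported by $\Sigma_{\mathfrak{A}}$, and for an alternating diagram this form is governed by the checkerboard $2$-colouring of $D$: the bands split into two families according to the colour of the region they pass through, and within each family the contributions have a single sign.

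The heart of the proof, and the step I expect to be the main obstacle, is establishing that this extreme coefficient is a unit. For Seifert's classical algorithm this is the Crowell--Murasugi phenomenon: one first handles special alternating diagrams, where $V+V^{\mathsf T}$ is sign-definite and the leading coefficient of $\det(V - tV^{\mathsf T})$ is visibly $\pm 1$, and then propagates to general alternating diagrams using that the Seifert surface of an alternating diagram is a Murasugi sum of the special-alternating pieces cut out by the Seifert circles, together with Gabai's theorem that a Murasugi sum is of minimal genus exactly when its summands are. I would run the same two-step scheme for $\mathfrak{A}$: first identify the ``special'' $n$-bridge projections for which $\Sigma_{\mathfrak{A}}$ is manifestly a plumbing of positive and negative Hopf bands carrying a definite form, and then show that $\mathfrak{A}$ applied to an arbitrary alternating $n$-bridge projection decomposes $\Sigma_{\mathfrak{A}}$ as a Murasugi sum of such pieces, so that Gabai's plumbing theorem (or, equivalently, a direct sutured-manifold hierarchy exhibiting the complement of $\Sigma_{\mathfrak{A}}$ as taut) lifts the local sign computation to the global genus equality $2g(L)+\mu-1 = 1-\chi(\Sigma_{\mathfrak{A}})$. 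The difficulty is combinatorial rather than conceptual: one must check that the block decomposition of $D$ on which $\mathfrak{A}$ behaves in the ``special'' way is compatible with a Murasugi-sum decomposition of $\Sigma_{\mathfrak{A}}$ -- i.e. that the decomposing spheres can be located among the spheres naturally attached to the bridge structure -- and it is precisely here that the new features of the $n$-bridge construction must be reconciled with the classical plumbing picture.
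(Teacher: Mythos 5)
Your overall reduction is fine (the inequality $g(L)\le g(\Sigma_{\mathfrak{A}})$ is automatic, so everything rests on minimality), but the proposal has a genuine gap at its very first substantive step: you assert that ``from the recipe for $\mathfrak{A}$ one reads off $\chi(\Sigma_{\mathfrak{A}})$ as an explicit function of the crossing number of $D$ and the number of constituent disks.'' That is false for $\mathfrak{A}$ in general and is precisely the hard point for alternating projections. Unlike Seifert's algorithm (or its Morse analogue $\mathfrak{A}_{\mathfrak{S}}$, which performs exactly one saddle per crossing), $\mathfrak{A}$ defers resolutions and deletes an obstruction arc whenever the obstruction disappears under the braid isotopy; this is exactly how it beats the canonical genus on the Kobayashi--Kawauchi examples, so $\chi(\Sigma_{\mathfrak{A}})$ is \emph{not} a naive crossing count. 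The paper's proof consists essentially of two lemmas you have skipped: first, that in an alternating projection no obstruction can vanish under the isotopy alone (so every obstruction eventually costs a saddle), and second, an interpolation argument showing that the arcs $\mathfrak{A}$ resolves correspond one-to-one with those resolved by $\mathfrak{A}_{\mathfrak{S}}$, whence $\chi(\Sigma_{\mathfrak{A}})=\chi(\Sigma_{\mathfrak{A}_{\mathfrak{S}}})=\chi(\Sigma_{SA})$ (the last equality being the paper's Theorem~\ref{thm:sa}). For the same reason your ``cheap route'' cannot be an isotopy statement: in general $\Sigma_{\mathfrak{A}}$ is not isotopic to the Seifert-algorithm surface, and even in the alternating case what one proves is equality of complexity, not isotopy to $\Sigma_{SA}$.

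Your lower-bound machinery is also heavier than needed and not carried out. Once $\chi(\Sigma_{\mathfrak{A}})=\chi(\Sigma_{SA})$ is known, minimality follows immediately from the classical fact (Crowell--Murasugi--Gabai) that the Seifert-algorithm surface of an alternating projection realizes the genus; there is no need to exhibit a Seifert matrix for $\Sigma_{\mathfrak{A}}$ with unit extreme coefficient, nor to decompose $\Sigma_{\mathfrak{A}}$ as a Murasugi sum of special pieces. As written, that part of your plan presumes detailed knowledge of the band structure of $\Sigma_{\mathfrak{A}}$ (checkerboard-governed signs, compatibility of the bridge structure with plumbing spheres) that you have not established, and you yourself flag the plumbing-compatibility step as unresolved. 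So the proposal in its current form neither computes $\chi(\Sigma_{\mathfrak{A}})$ nor completes the minimality argument; supplying the analogue of the paper's obstruction-persistence and interpolation lemmas is the missing content.
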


\begin{theorem}\label{thm:braid}
Let $B$ be a braid on $n$ strands with the property that each generator of the braid group appears with only one sign in the braid word.   If $L$ is the $n$-bridge link projection formed by taking the closure of $B$, then the surface $\Sigma_{\mathfrak{A}}$ built by applying $\mathfrak{A}$ to $L$ has genus equal to the genus of $L$
\end{theorem}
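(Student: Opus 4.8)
The plan is to pin down the Euler characteristic of $\Sigma_{\mathfrak{A}}$ when the input diagram is the closure of such a braid, and then to quote the classical fact that for these braids the genus is already detected by Seifert's algorithm. Write $B$ as a word of length $c$ in the generators $\sigma_1,\dots,\sigma_{n-1}$ and their inverses, and present the closure $L=\hat B$ as an $n$-bridge projection in the obvious way: $n$ strands running between $n$ maxima and $n$ minima, with all the braiding in between. The first real step is to unwind the definition of $\mathfrak{A}$ on this particular diagram and compute $\chi(\Sigma_{\mathfrak{A}})$. The hypothesis that each $\sigma_i$ occurs with a single sign should be exactly what guarantees that the local pieces $\mathfrak{A}$ attaches at successive crossings in the same column join up efficiently, without creating extra handles or compressible annuli, so that the tally of disks and bands is as small as possible. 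I expect the outcome to be $\chi(\Sigma_{\mathfrak{A}}) = n - c$, the same as the surface $\Sigma_S$ produced by Seifert's algorithm on the braid-closure diagram (which has $n$ Seifert circles and $c$ bands). Along the way one must check that $\Sigma_{\mathfrak{A}}$ is connected — treating separately the split summands that arise if some generator is omitted entirely — and that its number of boundary components equals the number of components $\mu$ of $L$, so that $g(\Sigma_{\mathfrak{A}}) = (2-\mu-\chi(\Sigma_{\mathfrak{A}}))/2$ is the honest genus.

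Next I would invoke Stallings' theorem that the Seifert surface obtained by Seifert's algorithm from a homogeneous braid closure is a fiber surface, hence of minimal genus; equivalently $g_c(L)=g(L)$ for these links, and $g(\Sigma_S)=g(L)$. (The Bennequin inequality, or the sharpness of the HOMFLY genus bound on homogeneous diagrams, gives an alternative route to the same conclusion $g(\Sigma_S)=g(L)$.) Combining this with the computation $\chi(\Sigma_{\mathfrak{A}}) = \chi(\Sigma_S)$ yields $g(\Sigma_{\mathfrak{A}}) = g(\Sigma_S) = g(L)$, and the theorem follows. In fact only the inequality $\chi(\Sigma_{\mathfrak{A}}) \ge \chi(\Sigma_S)$ is needed: that forces $g(\Sigma_{\mathfrak{A}}) \le g(L)$, and since every Seifert surface for $L$ has genus at least $g(L)$, equality follows at once.

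The main obstacle is the middle step: understanding precisely how $\mathfrak{A}$ behaves on a column of equally-signed crossings and verifying that it introduces no superfluous topology there. This is where the one-sign-per-generator hypothesis must be used in an essential way, since a mixed-sign column is exactly the configuration in which a Seifert-type construction can be forced to waste Euler characteristic; the argument has to isolate that phenomenon and show $\mathfrak{A}$ avoids it under the stated hypothesis. A secondary, more bookkeeping-style difficulty is confirming orientability and the boundary-component count for $\Sigma_{\mathfrak{A}}$ in general and handling braids that are reducible or whose word omits generators, where one reduces to the irreducible pieces and assembles the genera additively.
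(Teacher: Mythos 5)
Your overall architecture matches the paper's: show that $\Sigma_{\mathfrak{A}}$ has the same Euler characteristic as the surface produced by Seifert's algorithm on the closed-braid diagram, then use the fact that the latter realizes $g(L)$ for braids in which each generator appears with a single sign. For the second half your route differs from the paper's in an acceptable way: you cite Stallings' fiberedness theorem for homogeneous braid closures (or Bennequin-type sharpness), whereas the paper gives a self-contained sutured-manifold argument, cutting along $\Sigma_{\mathfrak{A}_{\mathfrak{S}}}$ and performing Gabai disc decompositions that merge same-sign one-handles until a single sutured ball remains, thereby reproving the known minimality statement. Either is fine, and your reduction to the inequality $\chi(\Sigma_{\mathfrak{A}})\ge\chi(\Sigma_S)$ is a correct simplification.

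The genuine gap is the first half, which you explicitly defer (``the main obstacle is the middle step'') rather than prove. That comparison is the actual content of the theorem, since minimality of the Seifert-algorithm surface for these braids was already known; the paper's contribution is the lemma that $\Sigma_{\mathfrak{A}}$ and $\Sigma_{\mathfrak{A}_{\mathfrak{S}}}$ (the Morse reformulation of Seifert's algorithm, shown isotopic to $\Sigma_{SA}$ in Theorem~\ref{thm:sa}) have equal complexity. The paper proves this by adapting Lemma~\ref{lem:obstr} and Lemma~\ref{lem:agree}: first, in a closed-braid projection the right endpoints of the frame curves never move, so an obstruction can vanish only through a resolution, not through the obstructing curve sliding past (this is where the one-sign hypothesis enters, exactly as in the alternating case); second, an interpolation argument shows the set of arcs resolved by $\mathfrak{A}$ --- some delayed until a frame would become critical, the rest handled by the end-state algorithm --- coincides with the set resolved by $\mathfrak{A}_{\mathfrak{S}}$, using the key observation that for these braids any two curves obstructing the same endpoint have the same co-orientation, so no unstacking arcs occur and the end-state algorithm resolves all remaining arcs simultaneously without saving (or wasting) complexity. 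Your proposal asserts the expected count $\chi(\Sigma_{\mathfrak{A}})=n-c$ but supplies no mechanism for why $\mathfrak{A}$'s delayed, criticality-driven resolutions end up one-to-one with Seifert's algorithm's bands; without an argument of this kind (obstruction persistence plus the co-orientation observation), the proof is incomplete at its central step.
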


Morse theory provides the key tool in this construction.  An $n$-bridge projection of a link has a natural Morse function given by height on the page, and this extends to any Seifert surface for the link.  Given such a surface, the Euler characteristic is a signed sum of the number of critical points of the Morse function $F$.  These in turn correspond to changes in the topology of $F^{-1}(x_i)$ for generic $x_i \in \mathbb{R}$, where each $F^{-1}(x_i)$ can be thought of as a horizontal slice through the surface.  Although this description takes a Seifert surface as its starting point, the observation that a sequence of horizontal sections through a surface determines its Euler characteristic may be used constructively: by specifying such a sequence, one in fact builds a Seifert surface.  The algorithm  $\mathfrak{A}$ is a set of instructions for constructing a sequence of slices compatible with a fixed $n$-bridge link projection.  

The Morse theoretic approach to surfaces is fundamental to the new algorithm $\mathfrak{A}$, but this point of view also enables a new presentation of the classical Seifert's algorithm.  This in turn extends to an algorithm which constructs surfaces representing arbitrary second homology classes in link complements.  

\subsection{Organization and conventions}
Section~\ref{sect:alg} begins with a more detailed exposition of the Morse theoretical context, and $\mathfrak{A}$ is described in detail in \ref{sect:algA}.  Section~\ref{sect:ex} presents several examples of $\mathfrak{A}$ applied to links, including examples for which the canonical genus is greater than the genus.  A Morse-theoretic version of Seifert's algorithm, introduced in Section~\ref{sect:sa}, is used to prove Theorems~\ref{thm:alt} and \ref{thm:braid} in Sections \ref{sect:thm1} and \ref{sect:thm2}, respectively.  Finally, the paper concludes with a generalization of the Morse Seifert's algorithm to constructing surfaces representing arbitrary relative second homology classes in link complements.

Throughout this paper all links will be assumed to be nonsplit.   Links are also assumed to come with fixed projections, so that we speak of applying an algorithm to the link rather than to the link projection.  Finally, we will use a standard presentation of the braid group where generators are positive half-twists between adjacent strands:
 \[
B_k = ( b_1, b_2, ...b_{k-1} \| b_ib_{i+1}b_i=b_{i+1}b_ib_{i+1};  b_i b_j = b_j b_i,  |i-j| \ge 2 )
\]

\section{The algorithm $\mathfrak{A}$}\label{sect:alg}

Thurston introduced the notion of the \textit{complexity} of a surface embedded in a three-manfold \cite{T}.

\begin{definition}
Let $S$ be a surface with components $s_i$ properly embedded in a three-manifold with bounday.  The \textit{complexity} of the surface, $\chi_-(S)$, is given by the following sum, where $\chi(s_i)$ denotes the Euler characteristic of $s_i$ :
\[\chi_-(S)=\sum_{ i \colon \chi(s_i) \le 0 } -\chi (s_i)\]
.
\end{definition}

A minimal complexity surface is a minimal genus surface, but complexity is both easier to work with in this context and generalizes more naturally to other second homology classes in the link complement. (See Section~\ref{sect:arb}.)

\subsection{Morse theory preliminaries}
\begin{definition} An  \textit{upper crossingless match} on $2n$ colinear points is a collection of $n$ disjoint curves bounded by the points and lying entirely above the line of the points.  The vertical reflection of such a figure, in which all the curves lie below the line of the points, is a \textit{lower crossingless match}.
\end{definition}

\begin{figure}
\begin{center}
\scalebox{.60}{\includegraphics{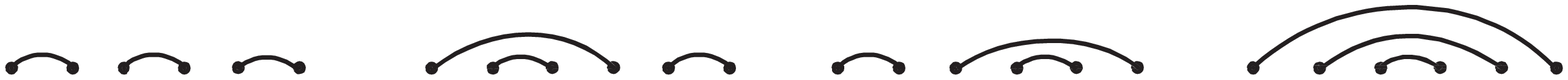}}
\end{center}
\caption{The upper crossingless matches on six points.}\label{fig:cm}
\end{figure}

Figure~\ref{fig:cm} illustrates the upper crossingless matches on six points.

An $n$\textit{-bridge projection} of a link is a decomposition into a pair of crossingless matches on $2n$ points (one upper and one lower) together with a braid on $2n$ strands connecting the endpoints of the crossingless match curves.  The curves of the crossingless matches are referred to as the \textit{bridges} of the projection.  Every link has an $n$-bridge projection for some $n$.  This is easily seen via the fact that any link has a projection as a closed braid, which is a $k$-bridge projection for $k$ equal to the braid index.  Given any $n$-bridge  projection, height on the page provides a natural Morse function $f: L \rightarrow [0,1]$  which maps the upper and lower bridges to $[1, 1-\epsilon)$ and $(\epsilon, 0]$, respectively.  

For any $x \in (\epsilon, 1-\epsilon)$, $f^{-1}(x)$ consists of $2n$ points which may be thought of as the intersection of the braid $B$ with a horizontal plane at height $x$.  As $x$ changes, the points move around this plane via an isotopy defined by $B$.  More precisely, the braid group generator $b_i$ maps to an isotopy $\mathfrak{b}_i$ of the marked plane which interchanges the $i^{th}$ and $(i+1)^{th}$ points  by clockwise rotation of a neighborhood of the pair.   (See Figure~\ref{fig:isot}.) $\mathfrak{b}_i$ is the identity away from a neighborhood of these two points, and multiplication in the braid group corresponds to composition of the associated $\mathfrak{b}_i$.   This map is well-defined up to isotopy fixing the endpoints of $B$.     
 
 \begin{figure}
\begin{center}
\scalebox{.60}{\includegraphics{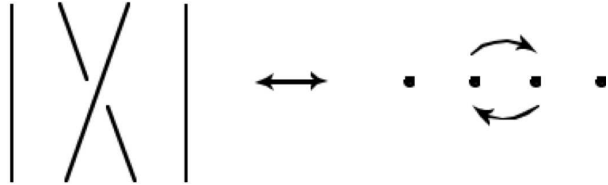}}
\end{center}
\caption{The positive crossing on the left corresponds to the isotopy of the marked plane shown on the right.}\label{fig:isot}
\end{figure}

The Morse function $f$ extends to a Morse function $F$ on any Seifert surface $\Sigma$ for $L$.  For a generic $x$, $F^{-1}(x)$ is a collection of $n$ disjoint curves connecting the points of $f^{-1}(x)$ and possibly some  disjoint simple closed curves.  We call the pre-image of a generic point of $F$ a \textit{frame}.  According to the Morse Lemma, (\cite{Mi}, Theorem 3.2), a critical point for $F$ corresponds to handle addition in $\Sigma$, and this in turn is reflected in a change in the topology between frames immediately above and below the critical point.  

There are three types of handle additions, corresponding to index zero, one, and two critical points.   Adding a zero-handle (``birth move") to $\Sigma$ corresponds to introducing a new innermost simple closed curve to a frame.  Adding a two-handle reverses this process and is known as a death move.  One-handle addition creates a saddle in $\Sigma$ and changes a frame by resolving along an arc as indicated in Figure~\ref{fig:crit}.  

To build a surface using $\mathfrak{A}$, let  $F^{-1}(1-\epsilon)$ be the upper crossingless match of the link projection.  The braid isotopy described above acts on this frame, and $\mathfrak{A}$ specifies saddle resolutions to perform. Each resolution increases the complexity of $\Sigma$ by one.  When the entire braid isotopy has acted, further resolutions are performed until the curves in the resulting frame may be paired with the lower crossingless match to create simple closed curves.  These curves are capped off using death moves, each of which decreases the complexity of $\Sigma$ by one.  Ultimately the Euler characteristic of the surface constructed is equal to the number of death moves perfomed minus the number of saddle resolutions.

\begin{figure}
\begin{center}
\scalebox{.45}{\includegraphics{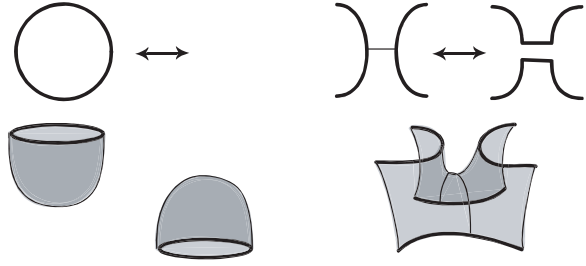}}
\end{center}
\caption{Left: Adding or subtracting an innermost simple closed curve corresponds to a zero-handle or two-handle addition in the surface $\Sigma$. Right: Resolution along an arc connecting compatibly-oriented curves corresponds to a saddle in $\Sigma$. }\label{fig:crit}
\end{figure}

\subsection{The algorithm $\mathfrak{A}$}\label{sect:algA}
The algorithm $\mathfrak{A}$ consists of the precise instructions for which saddle resolutions to perform, together with an end-state algorithm which eventually permits the curves in some frame to be glued to the lower bridges and capped off. This section describes the resolutions performed while the isotopy acts, deferring the end-state algorithm to Section~\ref{sect:esa}.  The reader may also find it helpful to look at the examples in Section~\ref{sect:ex}.

There are several definitions which will be convenient for describing $\mathfrak{A}$:

\begin{definition} Two curves in a crossingless match are \textit{stacked} if the endpoints of one curve are between the endpoints of the other.  The latter curve is the \textit{outer} curve, and the former, the \textit{inner} curve.
\end{definition}

In Figure~\ref{fig:cm}, the three crossingless matches on the right have stacks.  In general, a frame will consist of $n$ curves with colinear endpoints, but it will not be a crossingless match.  

\begin{definition} 
An endpoint of a curve is $obstructed$ if some curve in the frame passes beneath it.
\end{definition}

Note that a frame with no obstructions is an upper crossingless match.  We will also distinguish between \textit{direct} and \textit{indirect} obstruction: an endpoint is directly obstructed  by the curve immediately below it, and indirectly obstructed by any curves below that.  The \textit{obstruction number} of a frame is the sum over the endpoints of the number of curves below each point.

\begin{definition}
A curve connecting two points in a frame is \textit{critical} if it is not isotopic, in the complement of the $2n$ endpoints, to a curve with no vertical tangent line.  A frame is \textit{critical} if any of its constituent curves are critical.
\end{definition}

In a neighborhood of the upper bridges, the surface looks like the product of the upper crossingless match with an interval, so $F^{-1}(1-\epsilon)$ is a copy of this crossingless match.  Note that these curves inherit a transverse orientation from the link.  If the action of the first elementary isotopy $\mathfrak{b}_i$ on $F^{-1}(1-\epsilon)$ creates a frame with an obstruction, identify a saddle resolution to eliminate it as indicated in Figure~\ref{fig:obstrarc}.  The obstruction arcs shown there indicate resolutions consistent with the orientations of the curves, ensuring that the resulting surface will be orientable.  

\begin{figure}
\begin{center}
\scalebox{.45}{\includegraphics{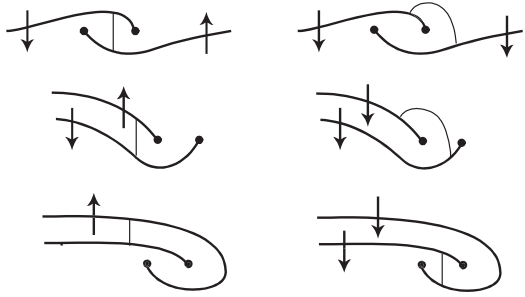}}
\end{center}
\caption{In each figure, the thin line indicates an arc whose resolution removes the obstruction. }\label{fig:obstrarc}
\end{figure}

Once the obstruction arc is identified, let the next elementary braid isotopy act on the decorated diagram and add new obstruction arcs if direct obstructions are created.  If a curve ceases to directly obstruct a particular endpoint, remove the corresponding resolution arc.   Continue decorating the diagram with obstruction arcs and letting the isotopy act until a frame becomes critical.   When this happens, resolve the obstruction arcs which indicate obstructions by the critical curve(s).  Once these are resolved, it may be necessary to add additional obstruction arcs so that all obstructions are marked.  If the frame had indirect obstructions, some curves may still be critical and the iredecorate/resolve process might need to be repeated.  Once the frame becomes non-critical and all obstructions are marked with appropriate arcs, let the next elementary isotopy act.  Repeat this process until the braid-induced isotopy is exhausted.  If the final frame is a crossingless match, glue the endpoints to the corresponding points of the lower bridges and remove the simple closed curves.  If the final frame is obstructed, apply the end-state algorithm described in Section~\ref{sect:esa} before gluing to the lower bridges.

\subsection{Examples}\label{sect:ex}
This section contains two examples of minimal complexity surfaces constructed using $\mathfrak{A}$.  The first example (Figure~\ref{fig:Kob}) shows $\mathfrak{A}$ applied to a family of knots whose members are distinguished by a twist parameter.  This family was studied by Kobayashi, Kobayashi, and Kawauchi, who showed that when $n \ne 0, 12$, the knot has $g(K)<g_c(K)$.   \cite{K}, \cite{Ka}

The second example (Figure~\ref{fig:alf}) is a knot studied by Alford, who showed that it has two minimal complexity surfaces with non-homeomorphic complements (\cite{A}).  The choice imposed by $\mathfrak{A}$ to make $F^{-1}(1-\epsilon)$ agree with the upper crossingless match determines the surface shown here.  One may represent any Seifert surface via a sequence of frames, however, so we note that Alford's other Seifert surface would have an initial frame in which additional simple closed curves enclose components of the crossingless match.

\begin{figure}
\begin{center}
\scalebox{.6}{\includegraphics{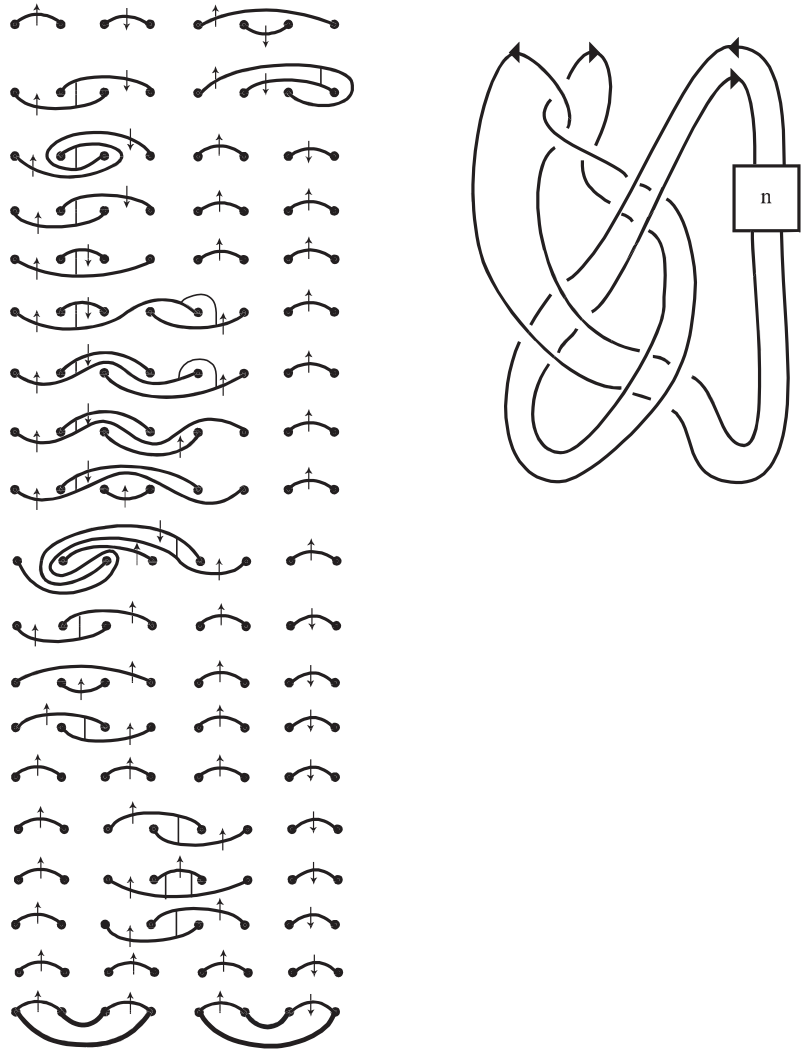}}  
 \end{center}
\caption{The box represents $n$ full twists, where $n \ne 0,12$, and the surface shown realizes $g(K)$.}\label {fig:Kob}
\end{figure}

\begin{figure}
\begin{center}
\scalebox{.6}{\includegraphics{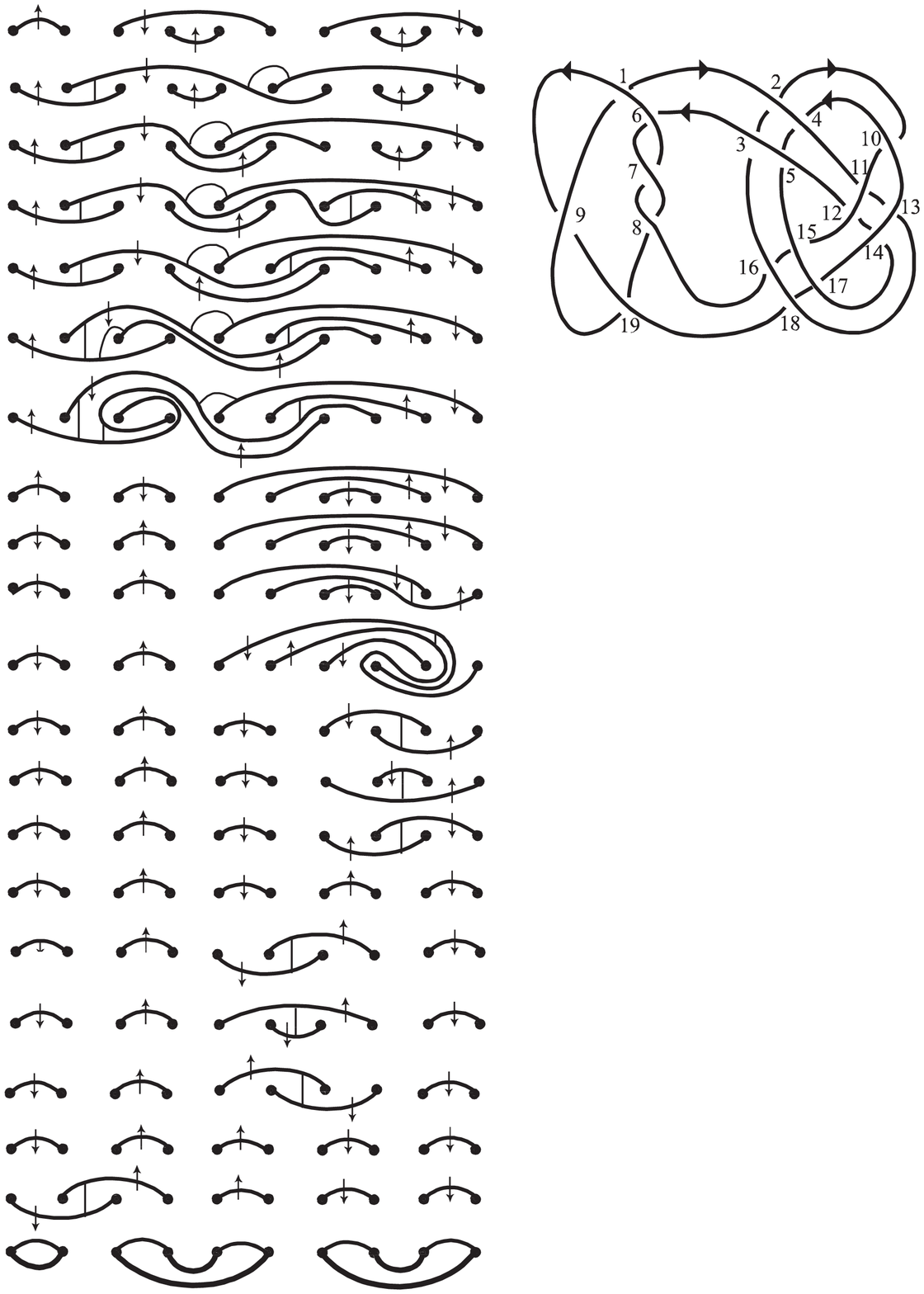}}  
 \end{center}
\caption{$\mathfrak{A}$ applied to a knot with two minimal complexity surfaces.  The numbers on the knot diagram indicate the order in which the corresponding crossings appear in the braid word used in the construction.}\label {fig:alf}
\end{figure}

\subsection{End-state algorithm}\label{sect:esa}

In order to produce a Seifert surface for the link $L$, the final frame must be glued to the lower bridges and the resulting simple closed curves capped off with two-handles.  This gluing operation is always possible if the final frame is a crossingless match.  The final ingredient in $\mathfrak{A}$ is an algorithm for resolving an arbitrary non-critical frame to a crossingless match. 

\begin{definition}
Two curves are $parallel$ if no part of another curve comes between them.
\end{definition}

\begin{definition}
An \textit{unstacking arc} is a resolution arc lying between and connecting two parallel curves with opposite transverse orientations.
\end{definition}

The algorithm to change an arbitrary non-critical frame into a crossingless match is depicted in (\ref{pict:esa}).  Every resolution step either reduces the obstruction number of the frame or shortens the total length of the curves, so the algorithm must terminate in finite time.  Once the frame has been transformed into a crossingless match, the curves are glued to the lower bridges of the link projection and the resulting simple closed curves capped off with two-handles.

\begin{picture}(400,310)
\put (220, 295){I. Are there any indirectly}
\put(220, 285){obstructed endpoints?}
\put(235,280){\vector(-3,-2){35}}
\put(320,280){\vector(3,-2){35}}
\put(120, 250){Is a single point}
\put(120, 240){obstructed by curves with}
\put(120, 230){opposite co-orientations?}
\put(175,225){\vector(-3,-2){35}}
\put(230,225){\vector(3,-2){35}}
\put(350,240){Resolve all arcs}
\put(380, 235){\vector(0,-1){30}}
\put(370, 190){CM}
\put(70,190){II. Is there a pair of parallel}
\put(70, 180){ obstructing curves with} 
\put(70, 170){opposite co-orientations?}
\put(100,165){\vector(-3,-2){30}}
\put(110,165){\vector(3,-2){35}}
\put(270,190){Resolve all arcs}
\put(305,185){\vector(0,-1){25}}
\put(270, 150){$\exists$ obstructions?}
\put(300,145){\vector(-3,-2){35}}
\put(310,145){\vector(3,-2){35}}
\put(250, 110){Relabel \& }
\put(250, 100){return to I}
\put(340, 110){CM}
\put(110, 130){Pick a bottommost}
\put(110, 120){pair of obstructing}
\put(110, 110){curves and resolve all.}
\put(110, 100){arcs from the lower curve.}
\put(145,95){\vector(0,-1){25}}
\put(110, 60){Return to II.}
\put(20, 130){Resolve the}
\put(20, 120){unstacking arc}
\put(50,115){\vector(0,-1){25}}
\put(20, 80){$\exists$ obstructions?}
\put(50,75){\vector(-3,-2){30}}
\put(50,75){\vector(3,-2){35}}
\put(0, 35){Relabel \&}
\put(0, 25){return to I}
\put(80, 35){CM}

\put(240, 45){All``Yes" branches are to the left,} 
\put(240, 35){and all ``No" branches to the right.}

\end{picture}

\begin{equation}\label{pict:esa}
The\ end-state\ algorithm
\end{equation}

\begin{figure}
\begin{center}
\scalebox{.5}{\includegraphics{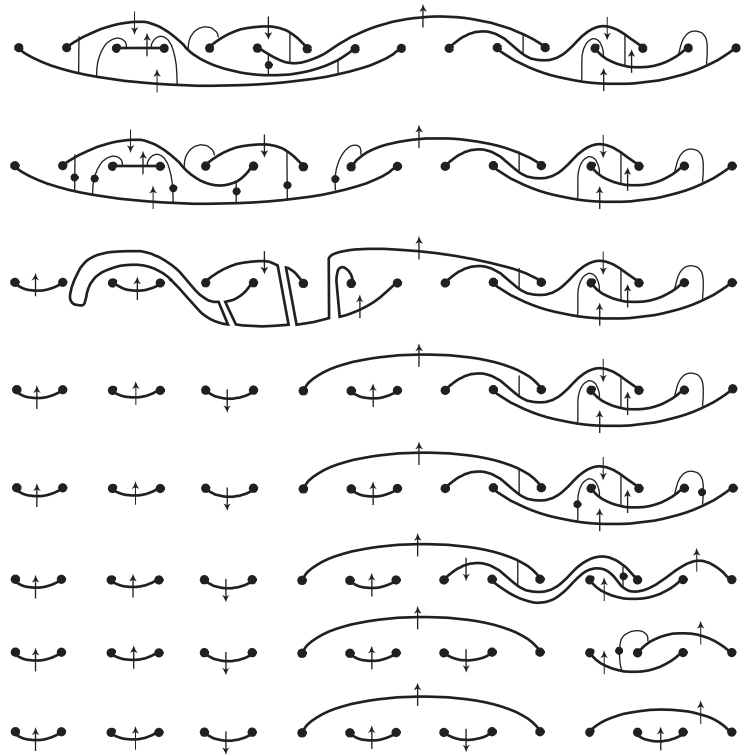}}  
 \end{center}
\caption{The end-state algorithm applied to a complicated frame.  Dots indicate which resolution arcs in a given frame are being resolved.}\label {fig:esa}
\end{figure}

\section{A Morse version of Seifert's algorithm}\label{sect:sa}

In order to prove the minimal complexity results claimed for $\mathfrak{A}$, we will compare the surfaces built using this technique to those built using Seifert's algorithm.  We thus introduce a variant of $\mathfrak{A}$, denoted $\mathfrak{A}_{\mathfrak{S}}$, which we prove is equivalent to Seifert's algorithm.  
 
$\mathfrak{A}_{\mathfrak{S}}$ is similar in structure to $\mathfrak{A}$, but resolves obstructions more agressively.  Beginning with an $n$-bridge projection, let the first elementary isotopy act and decorate the resulting frame with obstruction arcs as in $\mathfrak{A}$.  Immediately resolve all the obstruction arcs to get a (possibly different) crossingless match.  Repeat this process, always resolving a frame to a crossingless match before letting the next elementary isotopy act.  When the isotopy is exhausted, glue the crossingless match in the final frame to the lower bridges and cap off all simple closed curves with two-handles. 

\begin{theorem}\label{thm:sa}
When $\mathfrak{A}_{\mathfrak{S}}$ and Seifert's algorithm are applied to the same projection, they produce isotopic surfaces.
\end{theorem}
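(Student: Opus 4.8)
The plan is to exploit that both surfaces are built from the same combinatorial data --- a choice of (oriented) smoothing at every crossing of the braid, together with disks capping the resulting circles --- and to certify, crossing by crossing and in the order $\mathfrak{A}_{\mathfrak{S}}$ processes them, that the smoothings agree and the bands are attached in the same way. I would run this as an induction, sweeping the $n$-bridge projection from top to bottom. First put $\Sigma_{\mathfrak{S}}$, the surface from Seifert's algorithm, into a position in which height on the page restricts to a Morse function whose critical points occur only near the crossings (the band saddles, plus any small turnarounds a band forces) and near the caps; such a position exists after a preliminary isotopy, since between consecutive crossings the diagram is crossing-free and the Seifert disks there can be made product-like. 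Let $c$ be the number of crossings, and for $0 \le k \le c$ let $\Sigma_k$ be the surface-with-corners produced by $\mathfrak{A}_{\mathfrak{S}}$ after its first $k$ elementary isotopies have acted and their obstruction arcs have been resolved, and let $\Sigma_k^{\mathfrak{S}}$ be the part of $\Sigma_{\mathfrak{S}}$ lying above the height of the $(k{+}1)$-st crossing. The claim to prove inductively is that $\Sigma_k$ and $\Sigma_k^{\mathfrak{S}}$ are ambient isotopic rel the part of the boundary lying on $L$, by an isotopy supported above that height.

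The base case $k = 0$ is immediate: above the top crossing both surfaces are the upper crossingless match crossed with an interval (the Seifert disks restrict there to the traces of the bridge arcs). The inductive step rests on the following local lemma, which I expect to be the technical heart: \emph{an elementary isotopy $\mathfrak{b}_i^{\pm 1}$ acting on a crossingless match creates at most one obstruction, and performing the orientation-compatible resolution that $\mathfrak{A}_{\mathfrak{S}}$ prescribes attaches to the surface exactly the half-twisted band --- same pair of Seifert disks, same sign of twist --- that Seifert's algorithm attaches at the corresponding crossing.} Its proof is a case analysis on whether the two interchanged points lie on distinct arcs of the current crossingless match or on the same arc. In the first case the half-twist $\mathfrak{b}_i$ creates a single crossing of those two arcs, and by the orientation conventions of Figures~\ref{fig:isot},~\ref{fig:obstrarc}, and~\ref{fig:crit} the obstruction arc is the oriented (Seifert) smoothing arc, while the twisted region of the product surface $\Sigma_k$ contributes a half-twisted band whose sign is read off Figure~\ref{fig:isot}. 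In the second case no obstruction is created and $\mathfrak{A}_{\mathfrak{S}}$ does nothing; here the crossing is a curl, Seifert's algorithm attaches a trivially embedded half-twisted band to a new innermost disk, and a disk joined by such a band to a disk is isotopic rel the rest of the boundary to the original disk --- so the discrepancy is absorbed, which is exactly why the inductive hypothesis must be phrased up to isotopy rather than up to equality. Once $k = c$, it remains to glue the final crossingless match to the lower bridges and cap the resulting simple closed curves; below the last crossing the diagram is a trivial braid on the lower-bridge points, so the final frame of $\mathfrak{A}_{\mathfrak{S}}$ is precisely the family of Seifert circles cut at that height, and the death moves attach the same disks Seifert's algorithm uses to cap them. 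Hence $\Sigma_c$ together with the end gluing is isotopic to $\Sigma_{\mathfrak{S}}$.

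Finally, since every move used by either algorithm --- each Seifert smoothing, each cap, and each of the local isotopies above --- is realized by an ambient isotopy of $S^3$ supported in a ball meeting $L$ only where forced, the composite identification is itself an ambient isotopy, so the two surfaces are isotopic and not merely homeomorphic. The main obstacle I anticipate is the bookkeeping in the inductive step when the current crossingless match is deeply nested: one must track which frame arcs are sub-arcs of which Seifert disks, confirm that ``attach the Seifert band at crossing $k{+}1$'' really corresponds to the single resolution $\mathfrak{A}_{\mathfrak{S}}$ performs, and check that the order of resolution introduces no spurious handle --- equivalently, that $\mathfrak{A}_{\mathfrak{S}}$ never resolves along an arc with both ends on a single frame curve in a way that changes the homeomorphism type beyond what Seifert's algorithm predicts. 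A convenient way to organize this last point is to observe that once all $c$ isotopies have acted the diagram $\mathfrak{A}_{\mathfrak{S}}$ has effectively produced is the fully Seifert-smoothed diagram, and to identify the stack of frames of $\mathfrak{A}_{\mathfrak{S}}$, together with its caps, directly with the disk--band surface of that smoothed diagram, using the induction only to certify that the smoothings and bands agree crossing by crossing.
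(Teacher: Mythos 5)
Your proposal is correct and follows essentially the same route as the paper: both argue by induction on the crossings swept from the top crossingless match, comparing the simple subsurface that $\mathfrak{A}_{\mathfrak{S}}$ produces for each elementary isotopy with the corresponding horizontal slab of the Seifert-algorithm surface and checking case by case (your two cases of same versus distinct frame arcs refine, in the paper, to the seven crossing types of Figure~\ref{fig:subsurf}) that the local pieces are isotopic and yield the same next crossingless match, before gluing to the lower bridges and capping. The absorption of the curl case by an isotopy rel boundary, which forces your inductive hypothesis to be stated up to isotopy, is likewise implicit in the paper's pictorial comparison of subsurfaces.
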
 

\begin{proof}
Fix a projection and denote the surface constructed by $\mathfrak{A}_{\mathfrak{S}}$ by $\Sigma_{\mathfrak{A_{\mathfrak{S}}}}$.  Similarly, use $\Sigma_{SA}$ to denote the surface constructed by Seifert's algorithm.

$\Sigma_{\mathfrak{A_{\mathfrak{S}}}}$ decomposes naturally into \textit{ simple subsurfaces}, each corresponding to a single elementary isotopy.  The first frame in each simple subsurface is a crossingless match.  The elementary isotopy corresponding to a single braid generator acts on this frame, and any resulting obstructions are removed using $\mathfrak{A}_{\mathfrak{S}}$.  This leaves a (possibly different) crossingless match for the final frame, which is also the initial frame for the next simple subsurface.   

Since both $\Sigma_{\mathfrak{A}_{\mathfrak{S}}}$ and $\Sigma_{SA}$ are built from the same projection,  one may compare the corresponding subsurfaces when $\Sigma_{SA}$ is cut at the same heights on the projection.   Recall that a frame is the preimage of a point under the height Morse function from a Seifert surface to $\mathbb{R}$; cutting $\Sigma_{SA}$ at height $x$ thus yields a frame which depicts the intersection of a horizontal plane with $\Sigma_{SA}$.  (Our term ``stacking" arises from this viewpoint, where stacked curves in a frame correspond to concentric Seifert cycles, and thus vertically stacked Seifert discs.)   A simple subsurface of $\Sigma_{SA}$ is determined by the initial crossingless match, which is inherited from the previous subsurface, and the orientations of the crossing strands.  

Up to reflection, $\mathfrak{A}_{\mathfrak{S}}$ distinguishes seven types of crossings based on whether the crossing strands are the endpoints of stacked (or unstacked) curves with the same (or opposite) transverse orientations.  Seifert's algorithm distinguishes only between crossing strands with the same or opposite orientations.  Figure~\ref{fig:subsurf} shows what each  constructions yields in each of the cases.  It is important to note that the upper crossingless match is inherited from the subsurface above it; in the absence of this information, the lower crossingless match is not uniquely determined.  Note that the non-crossing strands shown in the diagrams might not be adjacent to the crossing strands, but the isotopy types of the subsurfaces are unaffected by disjoint Seifert discs or stacked curves.

\begin{figure}
\begin{center}
\scalebox{.6}{\includegraphics{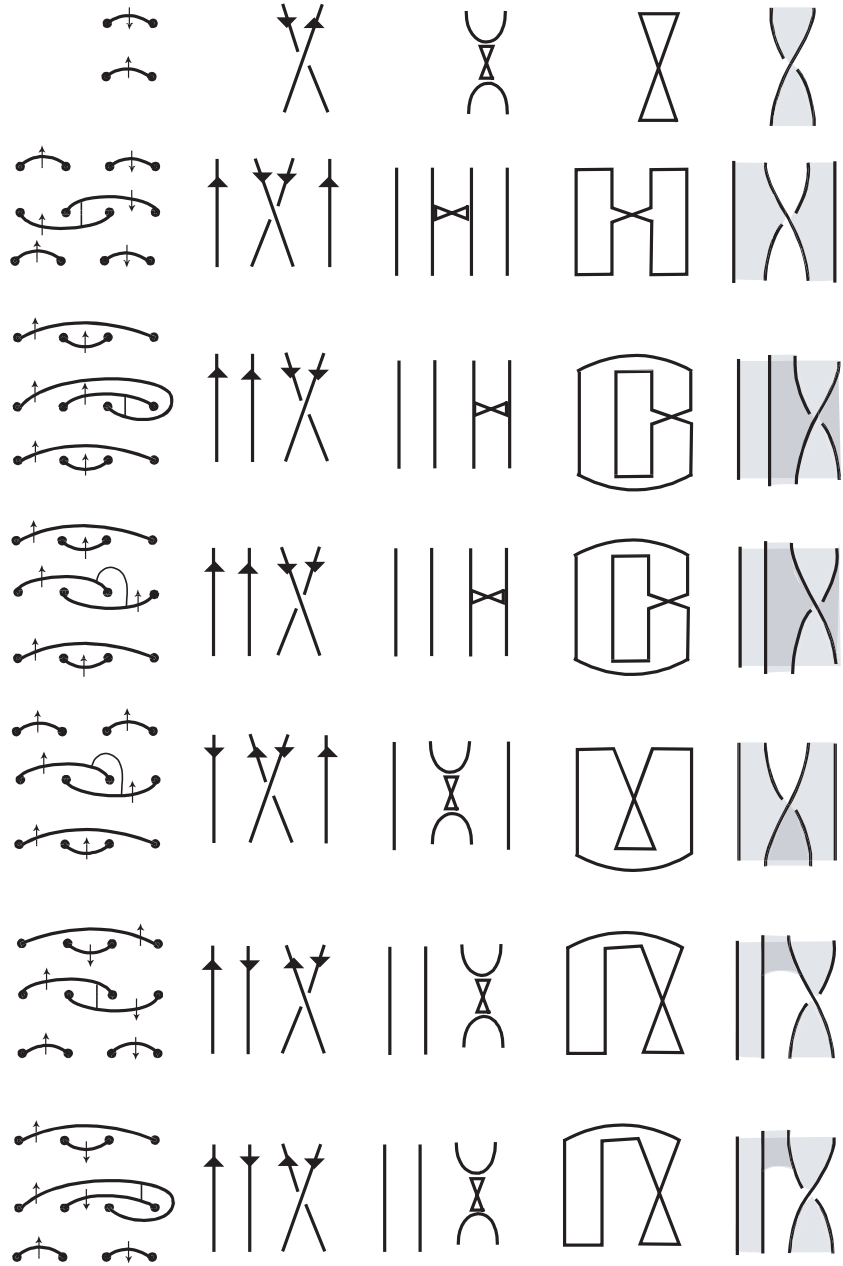}}  
 \end{center}
\caption{Left: Frames for a simple subsurface of $\mathfrak{A}_{\mathfrak{S}}$.  Second column: The subbraid corresponding to the simple subsurface.  Third column:  Seifert's algorithm resolves each crossing according to the orientation of the braid strands and glues in a twisted band.  Fourth and fifth columns: Schematic and pictorial illustrations of the simple subsurfaces of  $\Sigma_{SA}$. }\label {fig:subsurf}
\end{figure}

Comparing the left and right columns on Figure~\ref{fig:subsurf}, one sees that for a fixed crossing type and initial crossingless match, the simple subsurfaces of $\Sigma_{\mathfrak{A}_{\mathfrak{S}}}$ and $\Sigma_{SA}$ are isotopic.   In particular, they each produce the same final crossingless match, so the proof is inductive on the number of cuts.  The base case is the top crossingless match, where $\Sigma_{\mathfrak{A}_{\mathfrak{S}}}$ and $\Sigma_{SA}$ agree, and the inductive step shows that if the two surfaces agree at the $n^{th}$ cut, they agree at the $n+1^{th}$ cut as well.  

 \end{proof}

\section{Proof of Theorem~\ref{thm:alt}}\label{sect:thm1}

For an alternating projection, we claim that $\mathfrak{A}$ builds surfaces of the same complexity as does $\mathfrak{A}_{\mathfrak{S}}$.  Theorem~\ref{thm:sa} establishes that $\Sigma_{\mathfrak{A}_{\mathfrak{S}}  }$ and $\Sigma_{SA}$ are isotopic, and Seifert's algorithm is known to construct a minimal complexity surface from any alternating projection.

 \begin{lemma}\label{lem:obstr} 
In an alternating projection, no obstruction will vanish under the action of the braid isotopy.
\end{lemma}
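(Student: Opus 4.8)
The plan is to track carefully how obstructions are created and destroyed during the braid isotopy and to show that the alternating condition rules out destruction. Recall that an obstruction is an endpoint of a curve with another curve passing beneath it, and that under a single elementary isotopy $\mathfrak{b}_i$ the $i^{th}$ and $(i+1)^{th}$ points are interchanged by a clockwise rotation localized near that pair. First I would classify the effect of $\mathfrak{b}_i$ on the two curves whose endpoints sit at positions $i$ and $i+1$: either these endpoints belong to the same curve (in which case the isotopy drags a loop around and can only \emph{create} new obstructions, never remove one), or they belong to two distinct curves, say $\gamma$ and $\delta$. In the two-curve case the clockwise half-twist pushes one of the two strands beneath the other near the crossing site; the combinatorial outcome — and in particular whether an existing obstruction elsewhere in the frame disappears — depends on the relative transverse orientations of $\gamma$ and $\delta$ at those endpoints, which is exactly the data recorded by the crossing sign.

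Second, I would use the alternating hypothesis to pin down that relative orientation. The key point is that in an alternating projection, as one travels along the link the crossings alternate over/under, and this forces a rigidity on the transverse orientations of the strands meeting at any crossing relative to the over/under data of that crossing. Concretely I would argue that, because $F^{-1}(1-\epsilon)$ is the upper crossingless match (so the initial frame is genuinely unobstructed and its transverse orientations are inherited directly from the link), the orientations of the two strands at every crossing are determined, and the alternating pattern means every elementary isotopy $\mathfrak{b}_i$ encountered is of the ``obstruction-creating'' type rather than the ``obstruction-cancelling'' type. I would make this precise by checking each of the handful of local pictures (the same case analysis underlying Figure~\ref{fig:obstrarc} and Figure~\ref{fig:subsurf}): for same-orientation strands the half-twist of the appropriate sign tucks a strand under and increases the obstruction number; for opposite-orientation strands it would be possible for the rotation to pull a strand out from under another, but the alternating condition excludes the crossing sign that would do this.

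Third, I would address obstructions that are not local to the crossing site. A subtle point is that removing an obstruction does not require the obstructing curve to be one of the two strands being twisted: an isotopy near positions $i,i+1$ slides the whole strands $\gamma,\delta$, and their far ends could in principle move out from beneath some third curve $\eta$. I would handle this by noting that $\mathfrak{b}_i$ is the identity away from a neighborhood of the two points, so the only endpoints whose obstruction status can change are those of $\gamma$ and $\delta$ at positions $i$ and $i+1$, together with the question of whether $\gamma$ or $\delta$ newly passes under, or ceases to pass under, a point of some other curve lying at position $i$ or $i+1$ — but that third curve cannot have an endpoint there, since positions $i$ and $i+1$ are occupied by $\gamma$ and $\delta$. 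So the entire change is local, and the local analysis above suffices.

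The main obstacle I expect is the second step: translating the global ``alternating'' condition into the purely local statement about the sign of each $\mathfrak{b}_i$ relative to the transverse orientations of the curves it acts on. This requires being careful that the transverse orientations carried by the frame really are the ones induced by the link orientation at \emph{that height} (not flipped by some earlier resolution), and that is where the hypothesis $F^{-1}(1-\epsilon) = $ (upper crossingless match) together with the fact that $\mathfrak{A}$'s resolution arcs are chosen consistently with orientations gets used. Once that local characterization is in hand, the case-check is routine and short.
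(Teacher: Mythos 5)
There is a genuine gap. Your argument only addresses one of the two mechanisms by which an obstruction could disappear under the isotopy. You correctly handle the ``retreat'' mechanism: the crossing that created the obstruction can only be undone by the same braid generator with the opposite sign, which the alternating condition forbids; and your locality observation (that a single elementary isotopy $\mathfrak{b}_i$ cannot change the obstruction status of endpoints away from positions $i,i+1$) is fine as far as it goes. But the lemma concerns the cumulative action of the whole braid isotopy, and the dangerous scenario is this: after the obstruction is created, the \emph{other} endpoint $p_k$ of the obstructing curve can migrate, one elementary isotopy at a time, across the points separating it from the obstructed endpoint $p_{i+1}$, until it becomes adjacent to $p_{i+1}$ and passes in front of it --- at which point the entire obstructing curve lies above $p_{i+1}$ and the obstruction vanishes with no resolution ever performed. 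At that final step the obstructed point \emph{is} one of the two swapped points, so your locality argument does not exclude it; and the crossing sign of that final move is not the forbidden inverse of the original crossing, so the ``each generator appears with one sign'' condition does not exclude it either. Your step-2 case check of local orientation/sign types therefore does not suffice.

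Ruling out this second mechanism is in fact the bulk of the paper's proof: one uses the alternating condition (adjacent generators carry opposite signs) to constrain which non-critical frames can evolve from a crossingless match (the ``wave'' frames, with no indirect obstructions), shows that for $p_k$ to approach $p_{i+1}$ without the frame becoming critical it must snake alternately over and under the intervening points, deduces that $p_k$ can be at most three points away, and then checks the remaining cases directly to conclude that $p_k$ can only pass \emph{behind} $p_{i+1}$, so the obstruction persists. None of this global frame analysis appears in your plan, and without it the proof is incomplete. (A secondary, smaller issue: you phrase the sign restrictions in terms of the transverse orientations of the frame curves, but the relevant data for whether an obstruction can be undone is the sign with which each braid generator occurs; the transverse orientations govern which resolution arcs are legal, not whether the isotopy can cancel an obstruction.)
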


Thus, once an endpoint in a frame is obstructed, some resolution will be required to remove this obstruction.  

\begin{proof}[Proof of Lemma~\ref{lem:obstr}]  Beginning with a non-critical frame, a non-critical obstruction occurs when the right (respectively, left) endpoint of a curve passes in front of the adjacent endpoint to its left,  (right).  Without loss of generality, consider the first case.  Number the points from left to right, so that $p_i$ passes in front of $p_{i+1}$ in the situation described.  In order for this obstruction to vanish without the frame becoming critical, either the over-crossing endpoint must retreat again to the left, or else the entire curve obstructing curve must pass in front of $p_{k+1}$. In an alternating projection, neither of these is possible.

An alternating braid has the property that each generator may appear in the braid word with exactly one sign.  Furthermore, if $b_i$ and $b_{i+1}$ each occur in the braid word (with some sign) at least once, then in fact they must occur with opposite signs.  The over-crossing of the right endpoint $p_i$ corresponds to $b_i^{-1}$, and since $b_i $ doesn't appear in the braid word, the only way to remove the obstruction is for the entire curve bounded by $p_i$ to move to the right of $p_{i+1}$.  (Note that we are retaining the labels on the points dictated by their positions when the obstruction first occurred.) 

The condition that neighboring braid generators alternate in sign restricts the kind of frames that can evolve from a crossingless match via the braid isotopy.   If an elementary isotopy switches  the endpoints of a single curve, the transverse orientation changes, but the (unoriented) crossingless match remains the same.  Any other elementary isotopy, however, has the property that when it acts twice, the frame becomes critical.  Thus, disregarding orientation, there is only a small list of frames that may occur before any further elementary isotopies make the frame critical.   See Figure~\ref{fig:wave} for an example of the frames which may occur when $n=3$; the distinctive wave-like appearance of such frames makes it easy to identify whether a given frame is in this set even for large $n$.  If a single elementary isotopy renders one of these frames critical, the resolution(s) performed under $\mathfrak{A}$ returns a frame from this list.  Thus, in the course of applying $\mathfrak{A}$ to an alternating projection, the only non-critical frames which occur are in fact frames that could have come from some crossingless match without the frame first becoming critical.  

 \begin{figure}
\begin{center}
\scalebox{.6}{\includegraphics{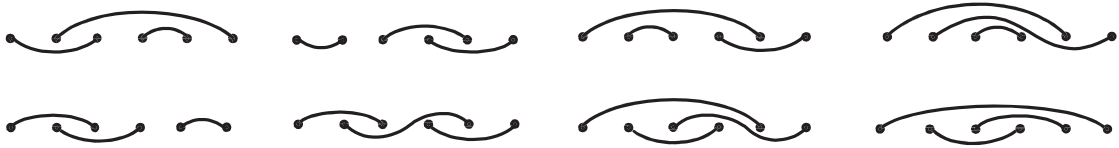}}  
 \end{center}
\caption{A complete collection of the non-critical frames which can evolve from a crossingless match on six points under the action of a braid involving the generators $b_1^{-1}, b_2, b_3^{-1}, b_4$, and $b_5^{-1}$.}\label{fig:wave}
\end{figure}

One consequence of this argument is that there are no indirect obstructions in non-critical frames.
This constraint allows us to show that the left endpoint, $p_k$, of the curve obstructing $p_{i+1}$ cannot pass in front of $p_{i+1}$ without making a critical frame.  First, note that $p_{i+1}$ cannot move further to the left; since $b_i^{-1}$ appears in the braid word, $b_{i-1}$ may as well, but the corresponding isotopy passes $p_{i-1}$ behind $p_{i+1}$.  When $p_{i+1}$ is the left endpoint of a curve,  this creates an indirect obstruction of $p_{i-1}$ by the curve bounded by $p_i$ and $p_k$.  When $p_{i+1}$ is the right endpoint of a curve, the argument above restricts this curve to passing above $p_{i-1}$, and the elementary isotopy $b_{i-1}$ makes the frame critical.  (See Figure~\ref{fig:ind}.)
 
 \begin{figure}
\begin{center}
\scalebox{.6}{\includegraphics{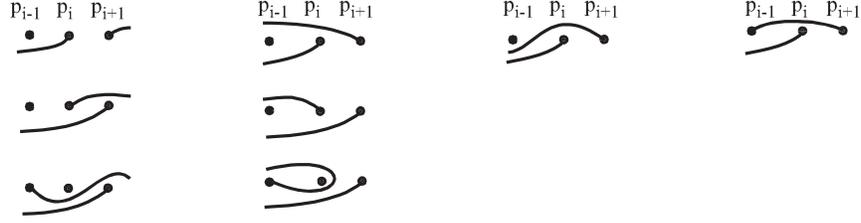}}  
 \end{center}
\caption{Left: If $p_{i+1}$ is a left endpoint, $\mathfrak{b}_{i-1}$  creates an indirect obstruction.  Center left: If $p_{i+1}$ is a right endpoint of a curve, $\mathfrak{b}_{i-1}$ makes the frame critical.  Center right: $p_{i+1}$ cannot be the right endpoint of a curve creating an indirect obstruction.  Right:  Since $b_{i-1}^{-1}$ does not appear in the braid word, this frame cannot have evolved from a crossingless match.}\label{fig:ind}
\end{figure}

  \begin{figure}
\begin{center}
\scalebox{.6}{\includegraphics{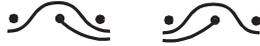}}  
 \end{center}
\caption{A curve passing alternately over and under endpoints will necessarily create indirect obstruction if its endpoints are separated by at least three points.}\label {fig:unob}
\end{figure}

With the position of $p_{i+1}$ fixed, note that for $p_k$ to become adjacent to $p_{i+1}$ without the frame becoming critical requires $p_{k}$ to move alternately in front of and behind the points separating it from $p_{i+1}$.   Requiring such crossings to preserve the non-criticality of the frame forces the curve connecting $p_{k}$ and $p_{i}$ to snake alternately over and under the separating points.  This implies that $i-k \le3$, for if the curve passes under a point, over a point, and under a second point, the curve bounded by the middle point indirectly obstructs one of the lateral ones.  (See Figure~\ref{fig:unob}).  For each of the cases when  $i-k \le3$, however, studying the possible frames directly shows that $p_k$ could only pass behind $p_{i+1}$.

\end{proof}

 \begin{lemma}\label{lem:agree} If $P$ is an alternating projection of a link, then the complexity of $\Sigma_{\mathfrak{A}}$ agrees with that of $\Sigma_{\mathfrak{A}_{\mathfrak{S}}  }$.
\end{lemma}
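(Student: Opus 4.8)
The plan is to reduce the statement to a bare count. Neither $\mathfrak{A}$ nor $\mathfrak{A}_{\mathfrak{S}}$ performs a birth move, so each builds a surface with Euler characteristic $D-S$, where $S$ is the number of saddle resolutions and $D$ the number of death moves performed over the run. Since both surfaces are connected (every frame contains $n$ arcs joining all $2n$ endpoints, so no component can be capped off as a separate piece, and $L$ is nonsplit) and connected surfaces with the same Euler characteristic have the same complexity, it suffices to prove $D_{\mathfrak{A}}-S_{\mathfrak{A}}=D_{\mathfrak{A}_{\mathfrak{S}}}-S_{\mathfrak{A}_{\mathfrak{S}}}$; in fact the approach below will show the stronger fact that $\mathfrak{A}$ and $\mathfrak{A}_{\mathfrak{S}}$ perform the same number of saddles and the same number of deaths on the alternating projection $P$. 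The point is that complexity sees only these two tallies, not the heights at which the critical points occur.

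First I would match the saddle counts. The algorithm $\mathfrak{A}_{\mathfrak{S}}$ resolves every obstruction as soon as it appears, whereas $\mathfrak{A}$ defers; but by Lemma~\ref{lem:obstr}, in an alternating projection no obstruction ever vanishes under the braid isotopy, so $\mathfrak{A}$ is eventually forced to resolve every obstruction as well. The real content is to show that the two runs resolve the \emph{same} obstructions, i.e.\ to exhibit a bijection between their saddle resolutions. The obstacle is that $\mathfrak{A}$'s deferral makes its frame $\Phi_j$ (one of the wave-like frames of Figure~\ref{fig:wave}) diverge from the crossingless match $\Psi_j$ produced by $\mathfrak{A}_{\mathfrak{S}}$ after the same $j$ elementary isotopies, so the same braid generator can act differently on the two. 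I would control this by induction on $j$, maintaining the invariant that $\Phi_j$ is precisely $\Psi_j$ with one saddle ``un-resolved'' at each directly obstructed endpoint, so that the number of saddles $\mathfrak{A}$ has performed, plus the obstruction number of $\Phi_j$, equals the number $\mathfrak{A}_{\mathfrak{S}}$ has performed. The three consequences of Lemma~\ref{lem:obstr} are exactly what keep this invariant intact across a new elementary isotopy: no obstruction disappears, so each pending obstruction is charged to one future saddle of $\mathfrak{A}$; a non-critical frame has no indirect obstructions, so when $\mathfrak{A}$ is forced to resolve it uses one saddle per obstructed endpoint with no cascade; and the explicit wave-like list of admissible frames means the minimal resolution $\mathfrak{A}$ performs returns another such frame, whose change in obstruction number matches the effect on $\Psi_j$ of the resolutions $\mathfrak{A}_{\mathfrak{S}}$ does at that crossing. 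Carrying out the inductive step then amounts to checking a short finite list of local pictures --- essentially the seven crossing types of Figure~\ref{fig:subsurf}, now decorated with $\mathfrak{A}$'s deferred obstruction arcs.

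Once the braid isotopy is exhausted, the invariant gives that $\mathfrak{A}_{\mathfrak{S}}$ has performed $\omega$ more saddles than $\mathfrak{A}$, where $\omega$ is the obstruction number of the final frame $\Phi_N$. The end-state algorithm (\ref{pict:esa}) then resolves $\Phi_N$ to a crossingless match, and here I would again use that $\Phi_N$ has no indirect obstructions to check that this stage contributes exactly $\omega$ further saddles, introduces no simple closed curves, and arrives at the crossingless match $\Psi_N$ that $\mathfrak{A}_{\mathfrak{S}}$ already held. Hence $S_{\mathfrak{A}}=S_{\mathfrak{A}_{\mathfrak{S}}}$; and since the two algorithms then glue the same crossingless match $\Psi_N$ to the lower bridges, they cap off the same number of simple closed curves, so $D_{\mathfrak{A}}=D_{\mathfrak{A}_{\mathfrak{S}}}$, and the complexities agree. (Should the end-state bookkeeping only yield one inequality, it still suffices: $P$ being alternating, $\Sigma_{\mathfrak{A}_{\mathfrak{S}}}\cong\Sigma_{SA}$ has minimal complexity by Theorem~\ref{thm:sa}, so $c(\Sigma_{\mathfrak{A}})\ge c(\Sigma_{\mathfrak{A}_{\mathfrak{S}}})$ is automatic and only the reverse inequality requires proof.)

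The step I expect to be the main obstacle is the inductive maintenance of this dictionary across an elementary isotopy acting where $\mathfrak{A}$ is already carrying a deferred obstruction arc: there $\Phi_j$ and $\Psi_j$ have genuinely diverged, the ``same'' braid generator does different things to the two runs, and one must verify that the combined tally --- saddles already performed plus obstructions still pending --- is nevertheless preserved. This is precisely where the wave-like classification and the impossibility of indirect obstructions from Lemma~\ref{lem:obstr} are indispensable, since they cut the verification down to a bounded list of local configurations; the alternating hypothesis enters only through that lemma.
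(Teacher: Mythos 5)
Your proposal is correct and follows essentially the same route as the paper: both hinge on Lemma~\ref{lem:obstr} (obstructions never vanish under the isotopy, non-critical frames have no indirect obstructions, the wave-frame classification) to match each deferred obstruction arc of $\mathfrak{A}$ with exactly one resolution of $\mathfrak{A}_{\mathfrak{S}}$ and to show both runs terminate in the same crossingless match, so the saddle and death tallies agree. The paper packages this same bookkeeping as an interpolating surface that resolves the same set of arcs as $\mathfrak{A}_{\mathfrak{S}}$ and is isotopic to $\Sigma_{\mathfrak{A}}$, rather than as your explicit inductive invariant, but the content is the same.
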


\begin{proof}[Proof of Lemma~\ref{lem:agree}] 
The proof rests on a process which interpolates between $\mathfrak{A}_{\mathfrak{S}}  $ and $\mathfrak{A}$ to  build a surface whose Euler characteristic agrees with both $\Sigma_{\mathfrak{A}}$ and $\Sigma_{\mathfrak{A}_{\mathfrak{S}}  }$.  Beginning with a crossingless match, let the braid isotopy act on the initial frame.  Decorate the resulting frame with obstruction arcs as if applying $\mathfrak{A}_{\mathfrak{S}}  $.  By hypothesis and Lemma~\ref{lem:obstr}, once an obstruction arc appears on the diagram, it cannot vanish under isotopy alone.  Continue the isotopy/decoration process until applying the next elementary isotopy would create a critical frame.  Before letting this isotopy act, first resolve any obstruction arcs such that the obstructing curve would become critical in the next frame. Comparing this frame to the corresponding frame had we applied $\mathfrak{A}_{\mathfrak{S}}  $, note that every resolution performed under $\mathfrak{A}_{\mathfrak{S}}  $ corresponds either to an obstruction arc still on the frame or an obstruction arc that was just resolved.  Now apply the next elementary isotopy  and consider the resulting frame.  If it is non-critical, proceed as above with the isotopy/decoration process.  If the frame is critical, then a new obstruction was created by a now-critical curve, and both $\mathfrak{A}_{\mathfrak{S}}  $ and $\mathfrak{A}$ require its resolution.  (For example, consider either of the bottom frames in Figure~\ref{fig:obstrarc}.)
 Repeat this procedure until the braid isotopy is exhausted.  Every obstruction arc added to the diagram was resolved  to prevent a critical frame, resolved to remove a criticality, or remains on the diagram.  

Note that applying $\mathfrak{A}$ to an alternating projection creates no frames with indirect obstruction.  Thus, the end-state algorithm in this case consists simply of resolving every arc on the frame, as if applying $\mathfrak{A}_{\mathfrak{S}}  $.  This completes a surface whose complexity agrees with that of $\Sigma_{\mathfrak{A}_{\mathfrak{S}}  }$, since the same set of arcs were resolved to give the same crossingless match.  Similarly, the surface is isotopic to $\Sigma_{\mathfrak{A}}$ since the two surfaces differ only by the order of the elements in elementary isotopy/resolution pairs.  
\end{proof}

\section{Proof of Theorem~\ref{thm:braid}}\label{sect:thm2}

The proof of Theorem~\ref{thm:braid} is similar to that of Theorem~\ref{thm:alt} in the sense that the argument again relies on comparing $\mathfrak{A}$ to $\mathfrak{A}_{\mathfrak{S}}  $.  However, we use sutured manifold theory to show that $\Sigma_{\mathfrak{A}_{\mathfrak{S}}  }$ realizes the genus of the link.  This result encompasses the known statement that Seifert's algorithm builds minimal complexity surfaces for positive braids.

\subsection{$\chi(\Sigma_{\mathfrak{A}_{\mathfrak{S}}  })=\chi(\Sigma_{\mathfrak{A}})$}
\begin{lemma} Let $L$ be the closure of a braid with the property that each braid generator appears with only one sign.  The surfaces $\Sigma_{\mathfrak{A}_{\mathfrak{S}}  }$ and $\Sigma_{\mathfrak{A}}$ have the same complexity.
\end{lemma}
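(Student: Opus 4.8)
The plan is to imitate the proof of Lemma~\ref{lem:agree}: build an interpolating surface $\Sigma'$ which performs, over the course of the construction, exactly the resolutions that $\mathfrak{A}_{\mathfrak{S}}$ performs (so $\chi(\Sigma') = \chi(\Sigma_{\mathfrak{A}_{\mathfrak{S}}})$), but which carries them out lazily enough that it differs from $\mathfrak{A}$ only by transposing commuting elementary-isotopy/resolution pairs (so $\Sigma' $ is isotopic to $\Sigma_{\mathfrak{A}}$). The first thing I would establish is the analogue of Lemma~\ref{lem:obstr}: \emph{in a braid in which every generator appears with a single sign, no obstruction vanishes under the braid isotopy alone}. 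As in the proof of Lemma~\ref{lem:obstr}, a non-critical obstruction is created when an endpoint $p_i$ of a curve passes in front of its neighbor $p_{i+1}$, an event realized by the generator $b_i$ with a definite sign; removing the obstruction without a resolution forces either that $p_i$ retreat (impossible, since that requires $b_i$ with the opposite sign) or that the whole obstructing curve be dragged past $p_{i+1}$. The proof of Lemma~\ref{lem:obstr} excluded the latter using the extra alternation of neighboring generators, which is unavailable here; I would replace it by a sign count, observing that dragging the obstructing curve around $p_{i+1}$ forces a net crossing at the position then occupied by $p_{i+1}$, hence a generator appearing with both signs, contradicting the hypothesis. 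Because indirect obstructions are \emph{not} excluded in this setting (unlike the alternating case), this step must be carried out for indirect obstructions as well, keeping track of the retained point-labels throughout.

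With that persistence statement in hand, I would run the interpolation. Starting from the nested crossingless match of the closed-braid projection, let the braid isotopy act while decorating each frame with obstruction arcs exactly as $\mathfrak{A}_{\mathfrak{S}}$ would; by the persistence statement these arcs cannot disappear. Whenever the next elementary isotopy would produce a critical frame, first resolve any obstruction arc whose obstructing curve would become critical, then let the isotopy act; if the resulting frame is still critical, resolve the arc flagged by the newly critical curve — a move demanded by both $\mathfrak{A}$ and $\mathfrak{A}_{\mathfrak{S}}$. At each stage, every resolution $\mathfrak{A}_{\mathfrak{S}}$ would have performed corresponds to an obstruction arc still on the diagram or one just resolved, so when the braid is exhausted the resolutions accumulated by $\Sigma'$ agree, up to reordering, with those of $\mathfrak{A}_{\mathfrak{S}}$; and since $\Sigma'$ performs precisely the moves of $\mathfrak{A}$ with commuting operations transposed, $\Sigma' \cong \Sigma_{\mathfrak{A}}$.

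The main obstacle is the end state. In contrast with Theorem~\ref{thm:alt}, the frames arising from a one-sign braid can carry indirect obstructions, so the end-state algorithm of Section~\ref{sect:esa} genuinely executes its unstacking and bottommost-pair branches rather than simply resolving every arc. I would show that the number of resolutions the end-state algorithm performs on the final interpolated frame equals the number $\mathfrak{A}_{\mathfrak{S}}$ performs in carrying that same frame down to a crossingless match. The tool here is the monovariant already noted in Section~\ref{sect:esa} — each resolution step strictly decreases the obstruction number or, failing that, the total length of the curves — which I would use to argue that both procedures terminate at the same crossingless match having expended the same number of resolutions; since identical crossingless matches are then glued to the lower bridges, both surfaces are capped by the same number of death moves, and equality of Euler characteristics (hence of complexity) follows. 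The delicate point, and the place where the one-sign hypothesis must be invoked a second time, is to bound how deeply nested the frames produced by a one-sign braid can become, so as to rule out the possibility that indirect obstructions cause $\mathfrak{A}$ and $\mathfrak{A}_{\mathfrak{S}}$ to diverge in their resolution counts.
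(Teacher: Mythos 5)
Your overall architecture (extend Lemma~\ref{lem:obstr}, rerun the interpolation of Lemma~\ref{lem:agree}, then deal with the end state) is the paper's, but both of the points where the one-sign hypothesis actually has to do work are gaps in your write-up. First, the sign count you propose for persistence of obstructions does not go through. When the far endpoint $p_k$ of the obstructing curve is dragged in front of the obstructed point, that event is a crossing of the \emph{same} handedness as the one that created the obstruction (in both events the strand arriving from the left passes in front of the obstructed strand), and it generally occurs at a different position in the braid; so no generator is forced to appear with both signs, and the one-sign hypothesis alone does not forbid the obstruction from vanishing. Indeed, for a one-sign braid word acting on all $2n$ strands of a general bridge position an obstruction can disappear under the isotopy alone. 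What saves the statement here is the structure of the closed-braid projection, which your argument never uses: the upper crossingless match is the nested one and the braid acts on only $n$ of the $2n$ strands, so every curve has one endpoint on a return strand that never moves; consequently the entire obstructing curve can never pass in front of the obstructed point, and that is exactly how the paper extends Lemma~\ref{lem:obstr} to these braids.

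Second, the end-state comparison is exactly the crux, and you leave it as a ``delicate point'' armed only with the termination monovariant of Section~\ref{sect:esa}. That monovariant guarantees termination but says nothing about the \emph{number} of resolutions; the paper's own remark after this lemma points out that a single unstacking arc can remove several obstructions at once, which is precisely the mechanism by which the counts for $\mathfrak{A}$ and $\mathfrak{A}_{\mathfrak{S}}$ could diverge, so ``both procedures terminate at the same crossingless match with the same number of resolutions'' cannot be extracted from termination alone. The missing observation is that for these braids, although indirect obstructions do occur, any two curves obstructing the same endpoint have the same co-orientation; hence no unstacking arcs ever arise, the first branch of the end-state algorithm sends the final frame straight to ``resolve all arcs,'' and the end-state step performs exactly the resolutions $\mathfrak{A}_{\mathfrak{S}}$ would perform on that frame. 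Without this (or an equivalent substitute), the interpolation does not close up, so as written the proposal has genuine gaps at both the persistence step and the end-state step.
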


\begin{proof}
Lemma~\ref{lem:obstr} states that for alternating projections, obstructions can be removed only by resolution.  However, with minor modification, the lemma may be extended to braids of the type covered by Theorem~\ref{thm:braid}.  These braids are characterized by the property that each braid group generator appears with only a single sign in the braid word; thus, removing an obstruction without resolution requires the entire obstructing curve to pass in front of the obstructed point.  For a closed braid projection this is impossible, as the right endpoints of the curves are all fixed.   This establishes the analogue of Lemma~\ref{lem:obstr} for the braids of Theorem~\ref{thm:braid}.   

Lemma~\ref{lem:agree} also adapts to these braids. The proof is identical in this case until the end-state algorithm is applied.  Although frames with indirect obstruction may occur when $\mathfrak{A}$ is applied to these projections, any two curves obstructing the same endpoint have the same co-orientation.  Therefore the end-state algorithm resolves all obstruction arcs simultaneously.  
\end{proof}

It is worth noting that the absence of unstacking arcs is necessary: in general, resolving a single unstacking arc may remove more than one obstruction, so resolving all obstruction arcs in that case could yield a surface with greater complexity than that of $\Sigma_{\mathfrak{A}_{\mathfrak{S}}  }$.  (This is illustrated by the first resolution in Figure~\ref{fig:esa}.)

\subsection{Sutured manifold theory}

Sutured manifold theory, developed by Gabai, is a useful tool for studying minimal complexity surfaces.  This section collects some basic results, and we refer the reader to \cite{G1} for more details. 

\begin{definition}
A \textit{sutured  manifold} $(M, \gamma)$ is a compact, oriented three-manifold with a collection of distinguished annuli and tori in its boundary.  Furthermore, each annular component of $\gamma$ is equipped with an oriented core core curve $s(\gamma)$ known as the \textit{suture}.
\end{definition}

\begin{definition}
If $(M, \gamma)$ is a sutured manifold, let $R(\gamma) = \partial M - \gamma$. 
\end{definition}

Components of $R(\gamma)$ are coherently transversely-oriented in the sense that the boundary of each component of $R(\gamma)$ represents the same homology class in $H_1(\gamma)$ as some suture.  Cutting a sutured manifold along an oriented surface $(S, \partial S)$ embedded in $(M, \gamma)$ gives rise to a canonical $\gamma'$ on the cut manifold $M'$.  In particular, the new components of $R(\gamma')$ inherit their orientations from that of $S$, and new sutures arise where oppositely-oriented components of $R(\gamma')$ meet.  

\begin{definition}\label{def:taut}
A sutured manifold $(M, \gamma)$ is $taut$ if $M$ is irreducible and $R(\gamma)$ is norm-minimizing in $H_2(M, \gamma)$. 
\end{definition}

\begin{lemma}[\cite{G1}, Lemma 3.12]\label{lem:disc}
Let $(M, \gamma) \rightarrow (M', \gamma')$ be a decomposition along a disc $J$ such that $|J \cap s(\gamma)|=2$.  Then $(M, \gamma)$ is taut if and only if $(M', \gamma')$ is taut.
\end{lemma}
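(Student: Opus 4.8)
The plan is to reduce this lemma to a standard fact in Gabai's sutured manifold theory, namely that a "product decomposition" preserves tautness. The statement that a decomposition along a disc $J$ meeting the suture set in exactly two points preserves tautness in both directions is precisely \cite{G1}, Lemma 3.12, so strictly speaking the work here is only to confirm that the hypotheses match and to unwind the definitions so that the reader understands why the statement is invoked. First I would recall that decomposing $(M,\gamma)$ along $J$ means cutting $M$ along $J$ and assigning the canonical sutured structure $\gamma'$ as described after Definition~\ref{def:taut}: the two copies of $J$ become part of $R(\gamma')$ with orientations inherited from $J$, and $|J \cap s(\gamma)| = 2$ guarantees that the new sutures on $\partial M'$ are exactly the two arcs where $J$ met the annular part of $\gamma$, closed up along $\partial J$. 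The key point is that cutting along such a disc is, topologically, the inverse of attaching a product $1$-handle $D^2 \times I$ along $D^2 \times \partial I$, and the condition $|J \cap s(\gamma)| = 2$ is what forces this handle to be attached in a way compatible with the sutured structure.

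The core of the argument, in the direction "$(M',\gamma')$ taut $\Rightarrow$ $(M,\gamma)$ taut," is to show that gluing the product handle back cannot decrease the Thurston norm of $R$ and cannot destroy irreducibility. Irreducibility is immediate: any essential sphere in $M$ can be isotoped off $J$ (a disc is incompressible in the relevant sense, or one appeals to the fact that $J$ is a disc so a sphere meeting it transversely can be surgered along innermost circles), hence lives in $M'$, contradicting tautness of $(M',\gamma')$. For the norm condition, one observes that $R(\gamma)$ and $R(\gamma')$ differ only by this handle: $\chi(R(\gamma)) = \chi(R(\gamma')) - 1$ because gluing back $D^2 \times I$ along two discs changes Euler characteristic by $+1$ on the ambient manifold's boundary pieces, and a norm-minimizing representative of the class $[R(\gamma)]$ in $H_2(M,\gamma)$ can be isotoped to meet $J$ in arcs only, then cut along $J$ to give a representative of $[R(\gamma')]$ of no greater norm; combined with the reverse inequality (obtained by gluing), one gets that $R(\gamma)$ is norm-minimizing iff $R(\gamma')$ is. The converse direction "$(M,\gamma)$ taut $\Rightarrow$ $(M',\gamma')$ taut" runs symmetrically, using that $J$ being a properly embedded disc meeting the suture twice means $M'$ is obtained by an honest decomposition and the decomposed manifold inherits tautness.

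The main obstacle — and the reason this is stated as a cited lemma rather than reproved — is the norm-minimizing half of the equivalence: showing that a norm-minimizing surface in the cut manifold can be capped off to a norm-minimizing surface upstairs (and vice versa) requires a genuine cut-and-paste argument in which one must verify that no compressions or boundary-compressions are lost or gained when passing across the product handle, and that the homology classes correspond correctly under the maps $H_2(M,\gamma) \leftrightarrow H_2(M',\gamma')$ induced by the decomposition. Since this is exactly the content of \cite{G1}, Lemma 3.12, I would simply cite it, noting only that the hypothesis $|J \cap s(\gamma)| = 2$ is precisely the one under which Gabai's lemma applies, and that in our situation the disc $J$ will arise as (a piece of) one of the Seifert discs or bands constructed by $\mathfrak{A}_{\mathfrak{S}}$, so the hypothesis can be checked directly from the combinatorics of the frames.
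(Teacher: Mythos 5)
Your proposal matches the paper exactly: the paper offers no proof of this lemma, presenting it purely as a citation of Gabai (\cite{G1}, Lemma 3.12), which is precisely what you do after checking that the hypothesis $|J \cap s(\gamma)|=2$ is the one under which Gabai's product-disc lemma applies. (Your supplementary sketch is harmless as motivation, though if you actually intended to reprove the lemma the Euler-characteristic bookkeeping would need care---cutting along a product disc changes $\chi(R(\gamma))$ by two, not one, since both copies of $J$ join $R(\gamma')$ and $R(\gamma)$ is also cut along two boundary arcs.)
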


One may view a Seifert surface $\Sigma$ for a link as a properly embedded surface in the link complement.  Cutting the link complement along this surface induces a sutured manifold structure where the two copies of $\Sigma$ become the components of $R(\gamma)$.  The boundary of the neighborhood  of each link component becomes an annulus in the cut manifold, and the cores of these annuli are the sutures.  In this setting, showing that the original surface $\Sigma$ was of minimal complexity is equivalent to showing the sutured manifold is taut.

\subsection{$\mathfrak{A}_{\mathfrak{S}}  $ produces taut sutured manifolds}
\begin{proof}[Proof of Theorem~\ref{thm:braid}]

To show that $\mathfrak{A}_{\mathfrak{S}} $ is a minimal complexity surface, consider the sutured manifold $M$ produced by cutting the link complement along $\mathfrak{A}_{\mathfrak{S}}$.  Specifically, we study the complementary sutured manifold, $M^c$, which is a product neighborhood of $\mathfrak{A}_{\mathfrak{S}} $ with the same sutures as $M$.  A disc decomposition in $M$ is equivalent to gluing a two handle to $M^c$.

If there were no crossings in the braid, every frame of $\mathfrak{A}_{\mathfrak{S}} $ would consist of a single stack of $n$ curves, so $M^c$ would consist of $n$ balls, each with a single simple closed curve suture.  Each saddle resolution in $\mathfrak{A}_{\mathfrak{S}} $ introduces a one-handle connecting two of these balls, and these are shown in  Figure~\ref{fig:suture}.  Note that the direction the sutures rotate around the one-handle is dictated by whether the corrseponding crossing in the braid is positive and negative.  We thus distinguish between positive one-handles and negative one-handles.  

\begin{figure}
\begin{center}
\scalebox{.6}{\includegraphics{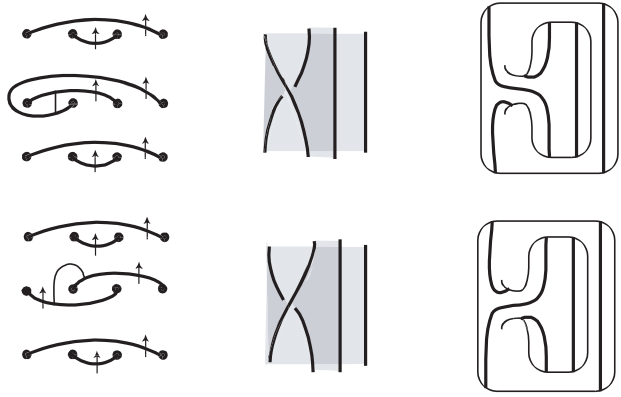}}  
 \end{center}
\caption{Top: A negative crossing.  Bottom: A positive crossing.  The righthand figures show two three-balls connected by a one-handle, with the sutures indicated by the darker lines.  }\label {fig:suture}
\end{figure}

\begin{figure}
\begin{center}
\scalebox{.6}{\includegraphics{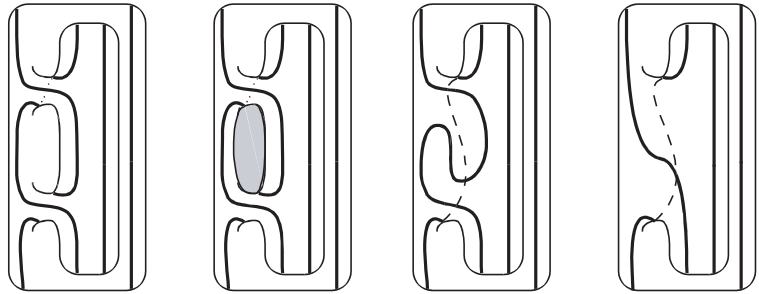}}  
 \end{center}
\caption{Two handles corresponding to crossings of the same sign define a natural decomposition disc. }\label {fig:decomp}
\end{figure}

The innermost curve in the stack corresponds to the front sutured ball, and the condition that each braid generator appears with a unique sign implies that all the handles connecting this ball to the next on have the same sign.   Any pair of one-handles of the same type connecting the same sutured balls defines a natural disc in the surface complement.  (See Figure~\ref{fig:decomp}.)  Decomposing along this disc replaces the pair with a single one-handle of the same sign.  Repeating this as necessary reduces the number of one-handles until the front ball is connected to its neighbor by a single handle.  The suture enters the ball along this one-handle, runs around the ball once, and exits through the handle again.  The suture can thus be isotoped off the front ball and onto the next one,  and the front ball itself collapsed along the one-handle so that  ball that was originally second is now the front ball.  (See Figure~\ref{fig:conn}.)  The process may be repeated until the original sutured manifold is reduced to a single ball with a single connected suture.  Since the complement of a ball in $S^3$ is another ball, this proves that $M$ is taut and the original surface $\mathfrak{A}_{\mathfrak{S}} $ had minimal complexity.
\end{proof}

\begin{figure}
\begin{center}
\scalebox{.6}{\includegraphics{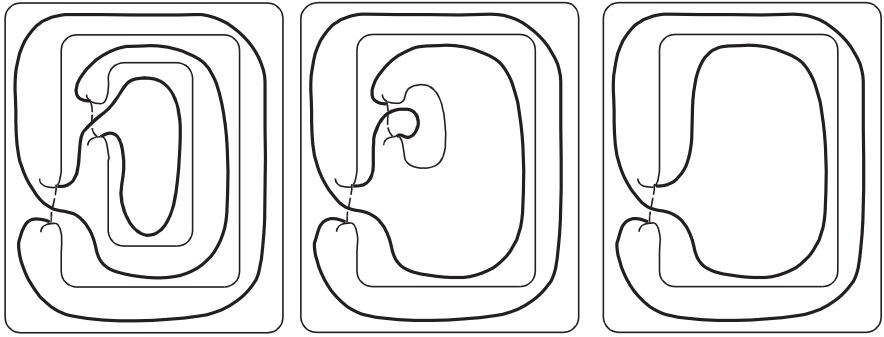}}  
 \end{center}
\caption{The suture may be isotoped off the ball corresponding to the innermost stacked curve, and the ball itself can be collapsed into its neighbor.}\label {fig:conn}
\end{figure}

\section{Extension of Seifert's algorithm to arbitrary classes}\label{sect:arb}
The relative second homology of the complement of a knot has rank one, so any Seifert surface for $K$ generates $H_2(S^3-K, \partial(S^3-K))$.  When $|L|>1$, however, the situation is more interesting, as the number of link components gives the rank of the second homology.  To any fixed second homology class $\sigma \in H_2(S^3-L, \partial(S^3-L))$ one may associate the the minimal complexity of an embedded surface representing $\sigma$.  Thurston first studied  this family of invariants, and recent work of Ozsv\`{a}th, Szab\`{o}, and Ni has shown that the Heegaard Floer link invariant can be used to compute these minimal complexities. (\cite{T}, \cite{OSz4}, \cite{N2})

This paper has been primarily concerned with constructing surfaces representing the Seifert class in the second homology, but the techniques may be extended to constructing surfaces representing arbitrary second homology classes as well.  In particular, $\mathfrak{A}_{\mathfrak{S}}$ may be extended to such an end.  

Fix some  $\sigma \in H_2(S^3-L, \partial(S^3-L))$ by labeling each link component $L_i$ with an integer $n_i$.  As before, begin with an $n$-bridge projection of a link, but instead of a single curve, connect pairs of points by $n_i$ parallel curves.  Alternately, a single curve may be drawn and labeled with its multiplicity; changing the transverse orientation if necessary allows us to assume all $n_i$ are nonnegative.   This frame is a generalization of a crossingless match in the sense that no points are obstructed.  Note that this frame may include isolated points if any of the $n_i$ is zero.  As the braid-induced isotopy acts on this frame,  again decorate the result with obstruction arcs.  Any obstruction by a curve with $n_i > 0$ necessarily creates indirect as well as direct obstructions, so simply resolving a single arc will not return a generalized crossingless match.  

\begin{figure}
\begin{center}
\scalebox{.6}{\includegraphics{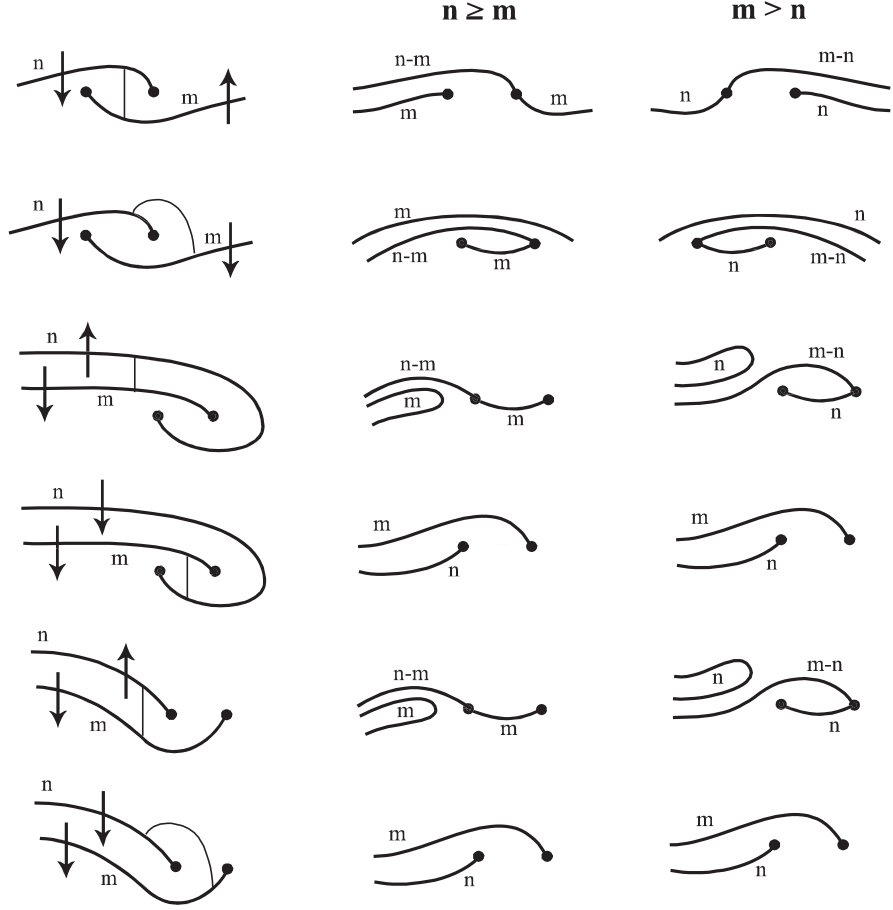}}  
 \end{center}
\caption{The figures on the left depict multicurve obstructions, and the center and right columns show the result of resolving sufficient direct obstructions to yield a generalized crossingless match.  In each case the number of resolutions needed is equal to the multiplicity of the obstructing multicurve.  Note that if $n=m=1$, these diagrams agree with those of Figure~\ref{fig:obstrarc}.}\label{fig:multi}
\end{figure}

Figure~\ref{fig:multi} indicates the number of resolutions and the resulting frames involved in removing multicurve obstructions.  Each of these is derived from repeated application of the resolutions shown in 
Figure~\ref{fig:obstrarc}.  Additionally, one must make sense of obstructed points which are not the endpoints of any curve.  Passing a curve across such a point corresponds to puncturing $\Sigma_{\mathfrak{A}_{\mathfrak{S}}}$ by a component of the link with $n_i=0$, so each puncture increases the complexity by one.  With these tools in hand, one may proceed as in $\mathfrak{A}_{\mathfrak{S}}$, resolving each frame to remove all obstructions before allowing the next elementary isotopy to act.  Once the isotopy has been exhausted, join the generalized crossingless match to the lower generalized crossingless match of the link projection and cap the resulting simple closed curves with two-handles.

This generalization of Seifert's algorithm offers a straightforward method for constructing surfaces representing arbitrary homology classes, and for computing their complexity.  The author is not aware of any alternating projections for which this process fails to construct a minimal-complexity surface, and we speculate that such surfaces might be of minimal complexity for some large class of projections.  

\section{Conclusion}
One may try, beyond the results of Theorems~\ref{thm:alt} and \ref{thm:braid}, to characterize the links for which $\Sigma_{\mathfrak{A}}$ realizes the link genus.  In \cite{Mo}, Moriah used twisted Whitehead doubles to show that the difference between the genus and the canonical genus of a knot may be made arbitrarily large.  (In fact, he showed this difference for genus and free genus, which is a lower bound for canonical genus.)  The two examples of Section~\ref{sect:ex}  are both doubles, although this term is used loosely in the case of Alford's knot  because of the unusual clasp.  We note also an example of Lyon which first established the existence of knots with no incompressible free Seifert surfaces \cite{Ly}.   Although Lyon's knot (****) is not a double, applying $\mathfrak{A}$ to a bridge projection recovers the incompressible, rather than the free, surface.  In each of these cases, the minimal complexity surfaces are composed of a relatively small number of highly twisted and knotted bands.  We suggest that $\mathfrak{A}$ is particularly well-suited to constructing such surfaces, whereas Seifert's algorithm is not.  It would be interesting to find a more precise classification of the type of knot for which these are the minimal complexity Seifert surfaces.

Even when $\mathfrak{A}$ does not produce a minimal complexity surface,  the technique of constructing a Seifert surface via a sequence of Morse slices may still be useful.  In \cite{Li}, the author constructs minimal complexity surfaces in pretzel link complements by performing resolutions different from those suggested by $\mathfrak{A}$.    Thus,  a secondary goal of this paper is to introduce a flexible technique that may be useful in a wide variety of situations.   

\bibliographystyle{alpha}
\bibliography{bibliography.bib}

\end{document}